\newtheorem{theorem}{Theorem}[section]
\newtheorem{lemma}[theorem]{Lemma}
\theoremstyle{definition}
\newtheorem{definition}[theorem]{Definition}
\numberwithin{equation}{section}
\begin{document}

\title[Measurable by seminorm selector]
{Multifunctions  admitting a measurable by seminorm selector in Frechet spaces}

\author{Sokol Bush Kaliaj }

\address{University of Elbasan,
Mathematics Department,
Elbasan, Albania}

\email{sokol\_bush@yahoo.co.uk}

\thanks{}

\subjclass[2010]{28B05, 28B20, 46B22, 46A04}

\keywords{Multifunctions,  Fr\'{e}chet spaces, measurable by seminorm selector.}

\begin{abstract}
In this paper we present full characterizations of multifunctions  admitting 
a measurable by seminorm selector in Frechet spaces.
\end{abstract}

\maketitle

\section{ Introduction and Preliminaries }

An important aspect when dealing with multifunctions is the existence of selections with appropriate properties depending on the properties of multifunctions. 
In this direction, 
when speaking about measurability, 
the most used result is the classical Kuratowski-Ryll-Nardzewski Theorem 
which ensures the existence of strongly measurable selections for Effros measurable multifunctions 
in complete metrizable spaces that are separable, 
see  \cite{KUR}.  
Without separability assumption,  
in Banach spaces the existence of strongly measurable selectors 
was obtained in the papers 
\cite{CASC1} and \cite{CASC2} by B.~Cascales, V.~Kadets, and J.~Rodr\'{i}guez. 
In non-normable non-separable spaces, we refer to the papers 
\cite{HANS1}-\cite{HANS2}, \cite{HIM} and \cite{MAG}.
A striking and pretty useful selection theorem relevant for this paper is the following:
\begin{theorem}(Cascales-Kadets-Rodr\'{i}guez \cite[Theorem 2.5]{CASC2})
Let $\mathfrak{X}$ be a Banach space 
and let $F: \Omega \to wk(\mathfrak{X})$ be 
a multifunction. 
Then the following statements are equivalent:
\begin{itemize} 
\item[(i)]
$F$ admits a strongly measurable selector,
\item[(ii)]
$F$ satisfies property $(P)$,
\item[(iii)]
There exist a set of measure zero $\Omega_{0} \in \Sigma$, 
a separable subspace $Y \subset \mathfrak{X}$ 
and a multifunction $G : \Omega \setminus \Omega_{0} \to wk(Y)$ 
that is Effros measurable and such that $G(t) \subset F(t)$ for every 
$t \in \Omega \setminus \Omega_{0}$. 
\end{itemize} 
\end{theorem}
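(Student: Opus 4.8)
The plan is to establish the cycle (i)$\Rightarrow$(iii)$\Rightarrow$(ii)$\Rightarrow$(i); the first two arrows are short, and almost all of the work lies in the third. For (i)$\Rightarrow$(iii): if $f$ is a strongly measurable selector of $F$, then by the Pettis measurability theorem $f$ is essentially separably valued, so there are a $\mu$-null set $\Omega_{0}\in\Sigma$ and a separable closed subspace $Y\subset\mathfrak{X}$ with $f(\Omega\setminus\Omega_{0})\subset Y$; putting $G(t):=\{f(t)\}$ for $t\in\Omega\setminus\Omega_{0}$ gives $G(t)\in wk(Y)$ with $G(t)\subset F(t)$, and $G^{-}(U)=f^{-1}(U)\cap(\Omega\setminus\Omega_{0})\in\Sigma$ for every open $U$, so $G$ is Effros measurable (completeness of $\mu$ makes the deleted null set harmless).

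For (iii)$\Rightarrow$(ii) I would pass to the Polish space $(Y,\|\cdot\|)$ and invoke the classical machinery: an Effros measurable multifunction with nonempty closed values in a Polish space admits a Castaing representation, i.e.\ a sequence $(g_{n})$ of strongly measurable selectors of $G$ with $G(t)=\overline{\{g_{n}(t):n\in\mathbb{N}\}}$ for every $t\in\Omega\setminus\Omega_{0}$ (Kuratowski--Ryll-Nardzewski together with the usual exhaustion argument). Each $g_{n}$ is a selector of $F$ off $\Omega_{0}$, and the countable family $\{g_{n}\}$ supplies precisely the $\Sigma$-measurable, arbitrarily fine approximating selectors that property $(P)$ requires.

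The substantive step is (ii)$\Rightarrow$(i). Property $(P)$ should be used to produce, for each $k\in\mathbb{N}$, a countably-valued strongly measurable map $\varphi_{k}:\Omega\to\mathfrak{X}$ with $d(\varphi_{k}(t),F(t))\le 2^{-k}$ for a.e.\ $t$, chosen compatibly so that $\sum_{k}\|\varphi_{k+1}(t)-\varphi_{k}(t)\|<\infty$ a.e. The pointwise norm-limit $\varphi(t):=\lim_{k}\varphi_{k}(t)$ then exists a.e., is strongly measurable as an a.e.\ limit of strongly measurable maps, and satisfies $d(\varphi(t),F(t))=0$; since each $F(t)$ is weakly compact, hence norm-closed, $\varphi(t)\in F(t)$ a.e. Redefining $\varphi$ arbitrarily on the remaining null set (using completeness of $\mu$) yields a strongly measurable selector of $F$ on all of $\Omega$, which is (i), and the cycle closes.

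I expect the main obstacle to be exactly the \emph{compatible} construction in (ii)$\Rightarrow$(i): to keep the sequence norm-convergent one must choose each level-$(k+1)$ approximant near the part of $F(t)$ already reached at level $k$, not merely within $2^{-k}$ of $F(t)$ somewhere --- this is where the genuine strength of property $(P)$, as opposed to a naive ``$\varepsilon$-approximable'' condition, is used --- while simultaneously keeping every set $\{t:d(\varphi_{k}(t),F(t))\le 2^{-k}\}$ in $\Sigma$, which in turn rests on measurability of $t\mapsto d(x,F(t))$ (obtained from scalar measurability of $F$ via a countable norming subset of $\mathfrak{X}^{*}$, and, for convex values, the support-function formula for the distance). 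Norm-closedness of weakly compact sets is precisely what upgrades $d(\varphi(t),F(t))=0$ to membership, and completeness of $(\Omega,\Sigma,\mu)$ is used throughout to absorb null sets.
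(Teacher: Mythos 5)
A preliminary but important point: the paper does not prove this statement at all. It is quoted from Cascales--Kadets--Rodr\'{i}guez \cite[Theorem 2.5]{CASC2} and used as a black box in the proof of Theorem \ref{T_Main}, so there is no in-paper proof to compare yours against; I can only judge your argument on its own terms.

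Your arrows (i)$\Rightarrow$(iii) and (iii)$\Rightarrow$(ii) are fine (for the latter you do not even need a full Castaing representation: one Kuratowski--Ryll-Nardzewski selector $g$ of $G$ suffices, since $g$ is essentially separably valued and a countable cover of $Y$ by sets of diameter $\le\varepsilon$ pulls back to a countable measurable partition, one piece of which meets any given $A\in\Sigma^{+}$ in positive measure). The genuine gap is in (ii)$\Rightarrow$(i), which is the entire content of the theorem and which your text plans rather than proves. Concretely: the exhaustion argument applied to property $(P)$ does give, for each $k$, a countably-valued measurable $\varphi_{k}$ with $F(t)\cap \overline{B}(\varphi_{k}(t),2^{-k})\neq\emptyset$ a.e.\ (and, incidentally, this is why your worry about measurability of $t\mapsto d(x,F(t))$ is misdirected --- the approximants are measurable by construction, not via scalar measurability, which property $(P)$ does not provide). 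What it does \emph{not} give is the compatibility you need: to choose $\varphi_{k+1}$ within $2^{-k}+2^{-(k+1)}$ of $\varphi_{k}$ you must know that the localized multifunction $t\mapsto F(t)\cap \overline{B}(\varphi_{k}(t),2^{-k})$ again admits a $2^{-(k+1)}$-fine countably-valued approximation a.e., i.e.\ that some form of property $(P)$ survives the localization. You explicitly flag this as ``the main obstacle'' and then do not overcome it; but establishing precisely this heredity (or circumventing it) is where all the work in \cite{CASC2} lies, so the proposal stops exactly where the proof should begin.

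A second, related symptom: in your outline weak compactness of the values enters only at the very last line, to upgrade $d(\varphi(t),F(t))=0$ to $\varphi(t)\in F(t)$, for which norm-closedness would suffice. In the actual argument weak compactness is used essentially: the selector (or the sub-multifunction $G$ of item (iii)) is extracted as a point of a decreasing sequence of nonempty weakly compact sets of the form $F(t)\cap C_{k}(t)$, whose intersection is nonempty by the finite intersection property --- this is exactly the mechanism the present paper imitates with $p_{i}$-compact sets in Lemma \ref{L_KeyLemma} and in the construction of $G(t)=\bigcap_{i}\overline{H_{i}(t)}^{p_{i}}$ in the proof of Theorem \ref{T_Main}. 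A sketch of this theorem in which the hypothesis $F(t)\in wk(\mathfrak{X})$ is consumed only as ``norm-closed'' should itself be taken as a warning that the decisive step is missing.
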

The aim of this paper is to prove a version of 
this theorem in a Fr\'{e}chet space $X$, 
i.e. in a metrizable, 
complete, locally convex topological vector space $(X,\tau)$. 
It is well known that there exists a increasing sequence $(p_{i})_{i \in \mathbb{N}}$ 
of seminorms 
defining the topology $\tau$. 
The metric 
\begin{equation*}
\begin{split}
d(x,y) = \sum_{i=1}^{+\infty} \frac{1}{2^{i}} \frac{p_{i}(x-y)}{1+p_{i}(x-y)}
\end{split}
\end{equation*} 
is such that $\tau_{d} = \tau$, 
where  $\tau_{d}$ is the topology generated by $d$. 
For any $i \in \mathbb{N}$,  
$\tau_{i}$ is the topology of the semimetric space $(X, p_{i})$,  
$\widetilde{X}_{i}$ is the quotient vector space $X/p_{i}^{-1}(0)$, 
$\varphi_{i}: X \to \widetilde{X}_{i}$ is 
the canonical quotient map,  
$(\widetilde{X}_{i},\widetilde{p}_{i})$ is the quotient normed space 
($\widetilde{p}_{i}(\varphi_{i}(\cdot)) = p_{i}(\cdot)$) and 
$(\overline{X}_{i},\overline{p}_{i})$ is the completion of 
$(\widetilde{X}_{i},\widetilde{p}_{i})$.

Throughout this paper 
$(\Omega, \Sigma, \mu)$ is a complete finite measure space, 
$\Sigma^{+}$ is the family of all measurable sets $A \in \Sigma$ with $\mu(A) >0$ and 
$\Sigma^{+}_{A} = \{B \in \Sigma^{+}: B \subset A\}$, $A \in \Sigma^{+}$.
By $2^{X}$ we denote the family of all nonempty  subsets of $X$ 
and by $ck(X)$ we denote  
the subfamily 
of $2^{X}$ made up of convex compact subsets of $X$. 
It is clear that 
$ck(X) \subset wk(X)$, 
where 
$wk(X)$ is the subfamily 
of $2^{X}$ made up of weakly compact subsets of $X$.


\begin{definition}\label{D_pMeasurable}
A function 
$f : \Omega \to X$ is said to be \textit{measurable} if 
\begin{equation*}
\begin{split}
(\forall \mathcal{O} \in \tau)[f^{-1}(\mathcal{O}) \in \Sigma]. 
\end{split}
\end{equation*}
If $f$ has only a finite set 
$x_{1}, \dotsc, x_{n}$ of values 
then $f$ is said to be a \textit{simple function}; in this case,  
$f$  is measurable if and only if 
\begin{equation*}
\begin{split}
f^{-1}(x_{i}) = \{t \in \Omega : f(t) = x_{i}\} \in \Sigma, 
\quad
i = 1, \dotsc,n. 
\end{split}
\end{equation*}
\end{definition}

\begin{definition}\label{D_Blondia_M}
Let $f: \Omega \to X$ be a function. 
We say that $f$ is 
\begin{itemize}
\item[(i)]
\textit{$p_{i}$-strongly measurable}
if  there exist 
a sequence of measurable simple functions $(f_{n}^{i} : \Omega \to X)$ 
and a measurable set  
$Z_{i} \in \Sigma$ with $\mu(Z_{i})=0$ 
such that  
\begin{equation*}
\begin{split}
\lim_{n \to \infty} p_{i}(f_{n}^{i}(t) -f(t)) = 0
\quad\text{for all }t \in \Omega \setminus Z_{i},
\end{split}
\end{equation*}
\item[(ii)]
\textit{measurable by seminorm} 
if for each $i \in \mathbb{N}$ 
the function $f$ is $p_{i}$-strongly measurable, 
\item[(iii)]
\textit{strongly measurable} 
if there exist a sequence of measurable simple functions 
$(f_{n}: \Omega \to X)$ and a measurable set $Z \in \Sigma$ with $\mu(Z)=0$ such that
\begin{equation*}
\begin{split} 
\lim_{n \to \infty} f_{n}(t) = f(t) 
\quad\text{for all }t \in \Omega \setminus Z. 
\end{split}
\end{equation*} 
\end{itemize}
\end{definition}


Given a multifunction  $F: \Omega \to 2^{X}$ and a subset $U \subset X$, 
we write  
\begin{equation*}
\begin{split}
F^{-}(U) = \{ t \in \Omega : F(t) \cap U \neq \emptyset \}
\end{split}
\end{equation*}
and
\begin{equation*}
\begin{split}
\widetilde{F}_{i}: \Omega \to 2^{\widetilde{X}_{i}}, 
\quad 
\widetilde{F}_{i}(\omega) = \varphi_{i}( F(\omega) ),
\quad 
i \in \mathbb{N}.  
\end{split}
\end{equation*}
We say that $F: \Omega \to 2^{X}$ 
\textit{admits 
a measurable by seminorm selector} if there exists 
a measurable by seminorm function $f: \Omega \to X$ such that 
$f(t) \in F(t)$ for every $t \in \Omega$.

\begin{definition}\label{D_Effros_Measurable}
Let $F: \Omega \to 2^{X}$ be a multifunction. 
We say that 
\begin{itemize}
\item[(i)]
$F$ is \textit{$p_{i}$-Effros measurable} if 
\begin{equation*}
\begin{split}
(\forall \mathcal{O} \in \tau_{i})[F^{-}(\mathcal{O}) \in \Sigma],
\end{split}
\end{equation*}
\item[(ii)]
$\widetilde{F}_{i}$ is \textit{Effros measurable} if 
\begin{equation*}
\begin{split}
(\forall \widetilde{\mathcal{O}} \in \widetilde{\tau}_{i})[\widetilde{F}_{i}^{-}
(\widetilde{\mathcal{O}}) \in \Sigma],
\end{split}
\end{equation*}
where $\widetilde{\tau}_{i}$ be the topology of $(\widetilde{X}_{i}, \widetilde{p}_{i})$, 
\item[(iii)]
$F$ is \textit{Effros measurable} if 
\begin{equation*}
\begin{split}
(\forall \mathcal{O} \in \tau)[F^{-}(\mathcal{O}) \in \Sigma].
\end{split}
\end{equation*} 
\end{itemize}
\end{definition}


\begin{definition}\label{D_Property_P}
Let $F: \Omega \to 2^{X}$ be a multifunction. 
We say that
\begin{itemize}
\item[(i)]
\textit{$F$ satisfies property $\mathcal{P}_{i}$} $(i \in \mathbb{N})$, 
if for each $\varepsilon >0$  and each $A \in \Sigma^{+}$ 
there exist $B \in \Sigma^{+}_{A}$ and $D \subset X$ with 
$\text{diam}_{i}(D) \leq \varepsilon$ 
such that 
\begin{equation*}
B \subset F^{-}(D), 
\end{equation*}
where
$\text{diam}_{i}(D) = \sup \{p_{i}(x-y): x,y \in D\}$,
\item[(ii)]
\textit{$\widetilde{F}_{i}$ satisfies property $\mathcal{P}$} $(i \in \mathbb{N})$ 
if for each $\varepsilon >0$  and each $A \in \Sigma^{+}$ 
there exist $B \in \Sigma^{+}_{A}$ and $\widetilde{D} \subset \widetilde{X}_{i}$ with 
$\text{diam}_{i}(\widetilde{D}) \leq \varepsilon$ 
such that 
\begin{equation*}
B \subset \widetilde{F}_{i}^{-}(\widetilde{D}), 
\end{equation*}
where
$\text{diam}_{i}(\widetilde{D}) = \sup \{\widetilde{p}_{i}(\widetilde{x}-\widetilde{y}): 
\widetilde{x},\widetilde{y} \in \widetilde{D}\}$,
\item[(iii)]
\textit{$F$ satisfies property $\mathcal{P}$},  
if for each $\varepsilon >0$  and each $A \in \Sigma^{+}$ 
there exist $B \in \Sigma^{+}_{A}$ and $D \subset X$ with 
$\text{diam}(D) \leq \varepsilon$ 
such that 
$B \subset F^{-}(D)$, 
where
$\text{diam}(D) = \sup \{d(x,y): x,y \in D\}$.
\end{itemize} 
\end{definition}

\section{The Main Result}


The main result is Theorem \ref{T_Main}. 
Let us start with a few auxiliary lemmas.
The first lemma is proved in \cite[p.206]{KOTHE}.

\begin{lemma}\label{L_Kothe}
Let $(\alpha_{i})$ be an increasing sequence of positive
numbers and let 
$$
\alpha = \sum_{i=1}^{+\infty} \frac{1}{2^{i}} \frac{\alpha_{i}}{1+\alpha_{i}}.
$$
Then the following statements hold:
\begin{itemize}
\item[(i)]
If for some $k \in \mathbb{N}$,  $\alpha_{k} < \frac{1}{2^{k}}$, 
then $\alpha < \frac{1}{2^{k-1}}$.
\item[(ii)]
If for some $k, s \in \mathbb{N}$,  $\alpha < \frac{1}{2^{k}}\frac{1}{2^{s+1}}$, 
then $\alpha_{k} < \frac{1}{2^{s}}$.
\end{itemize}
\end{lemma}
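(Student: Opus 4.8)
The plan is to reduce everything to two elementary properties of the function $g(t)=\tfrac{t}{1+t}$ on $[0,\infty)$, namely that it is strictly increasing with $g(t)<\min\{t,1\}$ for $t>0$, combined with the monotonicity of $(\alpha_i)$. Both statements then follow by splitting the defining series $\alpha=\sum_{i=1}^{+\infty}\tfrac{1}{2^{i}}g(\alpha_i)$ at the index $k$ and estimating the head and the tail separately.

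For (i), I would write $\alpha=\sum_{i=1}^{k}\tfrac{1}{2^{i}}g(\alpha_i)+\sum_{i=k+1}^{+\infty}\tfrac{1}{2^{i}}g(\alpha_i)$. In the head, monotonicity gives $\alpha_i\le\alpha_k$ for $i\le k$, hence $g(\alpha_i)\le g(\alpha_k)<\alpha_k<\tfrac{1}{2^{k}}$, so the head is bounded by $\tfrac{1}{2^{k}}\sum_{i=1}^{k}2^{-i}<\tfrac{1}{2^{k}}$. In the tail, $g(\alpha_i)<1$, so the tail is bounded by $\sum_{i=k+1}^{+\infty}2^{-i}=\tfrac{1}{2^{k}}$. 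Adding the two gives $\alpha<\tfrac{2}{2^{k}}=\tfrac{1}{2^{k-1}}$.

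For (ii), I would discard every term of the series except the $k$-th one, which is legitimate since all terms are nonnegative, obtaining $\tfrac{1}{2^{k}}g(\alpha_k)\le\alpha<\tfrac{1}{2^{k}}\tfrac{1}{2^{s+1}}$, whence $g(\alpha_k)<2^{-(s+1)}$. Since $g$ maps $[0,\infty)$ bijectively and increasingly onto $[0,1)$ with inverse $g^{-1}(u)=\tfrac{u}{1-u}$, this is equivalent to $\alpha_k<\tfrac{2^{-(s+1)}}{1-2^{-(s+1)}}=\tfrac{1}{2^{s+1}-1}$. It remains only to note that $2^{s+1}-1\ge 2^{s}$ for every $s\in\mathbb{N}$, so $\alpha_k<\tfrac{1}{2^{s+1}-1}\le\tfrac{1}{2^{s}}$.

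The whole argument is routine; the only points demanding a little care are keeping the inequalities strict (which is guaranteed by the strict bounds $g(t)<t$ and $g(t)<1$ for $t>0$, together with $\alpha_i>0$) and checking the final numerical inequality $2^{s+1}-1\ge 2^{s}$. I do not expect any genuine obstacle.
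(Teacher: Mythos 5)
Your proof is correct: both estimates check out (the head/tail split with $g(t)=\tfrac{t}{1+t}<\min\{t,1\}$ for part (i), and isolating the $k$-th term plus inverting $g$ for part (ii)), and the strictness of all inequalities is properly accounted for. The paper itself gives no proof, only a citation to K\"othe, and your argument is essentially the standard one found there, so there is nothing further to reconcile.
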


The following lemma  
presents a characterization of the Effros measurability of 
a multifunction $F$ 
in terms of the Effros measurability of $\widetilde{F}_{i}$.


\begin{lemma}\label{Relationship_Effros}
Let $F: \Omega \to 2^{X}$ be a multifunction. 
Then the following statements are equivalent:
\begin{itemize}
\item[(i)]
$F$ is Effros measurable.
\item[(ii)]
$F$ is $p_{i}$-Effros measurable for every  $i \in \mathbb{N}$. 
\item[(iii)]
$\widetilde{F}_{i}$ is Effros measurable for every  $i \in \mathbb{N}$. 
\end{itemize}
\end{lemma}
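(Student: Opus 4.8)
The plan is to prove three implications: (i)$\Rightarrow$(ii), (ii)$\Rightarrow$(i), and the equivalence (ii)$\Leftrightarrow$(iii). Throughout I would write $B_{i}(x,\varepsilon) = \{y \in X : p_{i}(y-x) < \varepsilon\}$ for the basic $p_{i}$-open balls.

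\textbf{(i)$\Rightarrow$(ii).} Each $p_{i}$ is $\tau$-continuous (equivalently, every ball $B_{i}(x,\varepsilon)$ is a basic $\tau$-neighbourhood of $x$), so $\tau_{i} \subseteq \tau$, and hence (i) immediately forces $F^{-}(\mathcal{O}) \in \Sigma$ for every $\mathcal{O} \in \tau_{i}$. This direction is routine.

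\textbf{(ii)$\Rightarrow$(i).} This is where the real content sits. The first step is to note that, because $(p_{i})$ is increasing, the balls $B_{i}(x,\varepsilon)$ form a neighbourhood base for $\tau$ at every point; this is exactly the role of Lemma~\ref{L_Kothe}: applying it with $\alpha_{i} = p_{i}(y-x)$, part~(i) gives $B_{k}(x,2^{-k}) \subseteq \{y : d(y,x) < 2^{-(k-1)}\}$, while part~(ii) produces, around any point, a $d$-ball contained in a prescribed $B_{i}(x,\varepsilon)$, so the $p_{i}$-balls and the $d$-balls generate the same topology $\tau = \tau_{d}$. The second step: given $\mathcal{O} \in \tau$, put $\mathcal{O}_{i} = \mathrm{int}_{\tau_{i}}(\mathcal{O})$. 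Then $\mathcal{O}_{i} \in \tau_{i}$, $\mathcal{O}_{i} \subseteq \mathcal{O}_{i+1}$ (since $\tau_{i} \subseteq \tau_{i+1}$), and $\bigcup_{i} \mathcal{O}_{i} = \mathcal{O}$, because each point of $\mathcal{O}$ has some $B_{i}(x,\varepsilon) \subseteq \mathcal{O}$ by the first step and therefore lies in $\mathcal{O}_{i}$. Finally $F^{-}(\mathcal{O}) = \bigcup_{i} F^{-}(\mathcal{O}_{i})$ is a \emph{countable} union of members of $\Sigma$, so $F^{-}(\mathcal{O}) \in \Sigma$. The subtlety worth flagging — and my main obstacle — is that $X$ need not be separable, so a $\tau$-open set is \emph{not} in general a countable union of $p_{i}$-balls; passing to the single $\tau_{i}$-open sets $\mathcal{O}_{i}$ and exhausting $\mathcal{O}$ by them is what circumvents this.

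\textbf{(ii)$\Leftrightarrow$(iii).} This is a transfer across $\varphi_{i}$. Using $\widetilde{p}_{i}(\varphi_{i}(\cdot)) = p_{i}(\cdot)$ one checks that $\varphi_{i} \colon (X,\tau_{i}) \to (\widetilde{X}_{i},\widetilde{\tau}_{i})$ is surjective, continuous and open, with $\varphi_{i}(B_{i}(x,\varepsilon)) = \{\widetilde{z} : \widetilde{p}_{i}(\widetilde{z} - \varphi_{i}(x)) < \varepsilon\}$, and that every $\mathcal{O} \in \tau_{i}$ is $\varphi_{i}$-saturated: if $\varphi_{i}(y) = \varphi_{i}(x)$ with $x \in \mathcal{O}$, then $p_{i}(y-x)=0$, so $y$ lies in any $B_{i}(x,\varepsilon) \subseteq \mathcal{O}$. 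Hence $\mathcal{O} \mapsto \varphi_{i}(\mathcal{O})$ is a bijection of $\tau_{i}$ onto $\widetilde{\tau}_{i}$ with inverse $\widetilde{\mathcal{O}} \mapsto \varphi_{i}^{-1}(\widetilde{\mathcal{O}})$, and for a corresponding pair $\mathcal{O} = \varphi_{i}^{-1}(\widetilde{\mathcal{O}})$,
\[
\widetilde{F}_{i}^{-}(\widetilde{\mathcal{O}}) = \{t : \varphi_{i}(F(t)) \cap \widetilde{\mathcal{O}} \neq \emptyset\} = \{t : F(t) \cap \varphi_{i}^{-1}(\widetilde{\mathcal{O}}) \neq \emptyset\} = F^{-}(\mathcal{O}).
\]
Thus the families $\{F^{-}(\mathcal{O}) : \mathcal{O} \in \tau_{i}\}$ and $\{\widetilde{F}_{i}^{-}(\widetilde{\mathcal{O}}) : \widetilde{\mathcal{O}} \in \widetilde{\tau}_{i}\}$ coincide, so $F$ is $p_{i}$-Effros measurable iff $\widetilde{F}_{i}$ is Effros measurable; letting $i$ range over $\mathbb{N}$, and combining with (i)$\Rightarrow$(ii) and (ii)$\Rightarrow$(i), gives the full equivalence. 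The only slightly delicate point here is the openness and saturation of $\varphi_{i}$, which rests entirely on the identity $\widetilde{p}_{i}(\varphi_{i}(\cdot)) = p_{i}(\cdot)$.
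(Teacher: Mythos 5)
Your proposal is correct and follows essentially the same route as the paper: $\tau_{i}\subseteq\tau$ for (i)$\Rightarrow$(ii), the countable exhaustion $\mathcal{O}=\bigcup_{i}\mathcal{O}_{i}$ with $\mathcal{O}_{i}\in\tau_{i}$ for (ii)$\Rightarrow$(i) (the paper builds $\mathcal{O}_{i}$ as an explicit union of $p_{i}$-balls rather than taking $\mathrm{int}_{\tau_{i}}(\mathcal{O})$, a purely cosmetic difference), and the saturation of $\tau_{i}$-open sets under $\varphi_{i}$ together with $F^{-}(\mathcal{O})=\widetilde{F}_{i}^{-}(\varphi_{i}(\mathcal{O}))$ for (ii)$\Leftrightarrow$(iii).
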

\begin{proof}
$(i) \Rightarrow(ii)$ 
Assume that $F$ 
is Effros measurable, let $k \in \mathbb{N}$ and  
let $\mathcal{O} \in \tau_{k}$. 
Then, given $x \in \mathcal{O}$ there exists $s \in \mathbb{N}$ such that 
\begin{equation*}
B_{k}\left ( x, \frac{1}{2^{s}} \right ) \subset \mathcal{O}, 
\quad
B_{k}\left ( x, \frac{1}{2^{s}} \right ) = 
\left \{y \in X: p_{k}(x-y) < \frac{1}{2^{s}} \right \}.
\end{equation*} 
By Lemma \ref{L_Kothe} we have also
\begin{equation*}
B \left ( x, \frac{1}{2^{k}}\frac{1}{2^{s+1}} \right ) \subset B_{k}\left ( x, \frac{1}{2^{s}} \right ) \subset \mathcal{O}, 
\quad 
B \left ( x, \frac{1}{2^{k}}\frac{1}{2^{s+1}} \right ) 
= 
\left \{y \in X: d(x,y) < \frac{1}{2^{k}}\frac{1}{2^{s+1}} \right \}.
\end{equation*}
This means that $\mathcal{O} \in \tau_{d} = \tau$ 
and, consequently,  
$F^{-}(\mathcal{O}) \in \Sigma$. 
Thus, $F$ is $p_{k}$-Effros measurable.

$(ii) \Rightarrow (i)$  
Assume that $(ii)$ holds and let $\mathcal{O} \in \tau_{d} = \tau$. 
Then, for each $x \in \mathcal{O}$ there exists $k_{x} \in \mathbb{N}$ such that 
\begin{equation*}
B\left ( x, \frac{1}{2^{k_{x}-1}} \right ) \subset \mathcal{O}, 
\quad
B\left ( x, \frac{1}{2^{k_{x}-1}} \right ) = 
\left \{y \in X: d(x,y) < \frac{1}{2^{k_{x}-1}} \right \}
\end{equation*} 
and, by Lemma \ref{L_Kothe},
\begin{equation*}
B_{k_{x}} \left ( x, \frac{1}{2^{k_{x}}} \right ) \subset B\left ( x, \frac{1}{2^{k_{x}-1}} \right ) \subset \mathcal{O}, 
\quad 
B_{k_{x}} \left ( x, \frac{1}{2^{k_{x}}} \right )
= 
\left \{y \in X: p_{k_{x}}(x-y) < \frac{1}{2^{k_{x}}} \right \}.
\end{equation*}
Hence, for each $i \in \mathbb{N}$, we have
\begin{equation*} 
\mathcal{O}_{i} \in \tau_{i},\quad 
\mathcal{O}_{i} = \bigcup_{(x \in \mathcal{O}:k_{x} =i)} B_{k_{x}} \left ( x, \frac{1}{2^{k_{x}}} \right ), 
\quad
F^{-}(\mathcal{O}_{i}) \in \Sigma,
\end{equation*}
and since
\begin{equation*}
\mathcal{O} = \bigcup_{i=1}^{+\infty} \mathcal{O}_{i}
\quad
\text{and}
\quad
F^{-}(\mathcal{O}) = \bigcup_{i=1}^{+\infty} F^{-}(\mathcal{O}_{i})
\end{equation*}
it follows that $F^{-}(\mathcal{O}) \in \Sigma$. 
This means that $F$ is Effros measurable.

$(ii) \Leftrightarrow (iii)$ 
From the fact that $\varphi_{i}$ is an open map it follows that
\begin{equation*}
\begin{split}
\widetilde{\tau}_{i} = 
\{ \varphi_{i}(\mathcal{O}) : \mathcal{O} \in \tau_{i} \}.
\end{split}
\end{equation*}
Thus it is enough to prove that for any $\mathcal{O} \in \tau_{i}$ the following equality holds
\begin{equation}\label{eq_L_iEffros.1}
\begin{split}
F^{-}(\mathcal{O})= \widetilde{F}_{i}^{-}(\varphi_{i}(\mathcal{O})). 
\end{split}
\end{equation} 
It is clear that 
\begin{equation}\label{eq_L_iEffros.2}
\begin{split}
t \in F^{-}(\mathcal{O}) \Rightarrow F(t) \cap \mathcal{O} \neq \emptyset 
\Rightarrow 
\widetilde{F}_{i}(t) \cap \varphi_{i}(\mathcal{O}) \neq \emptyset 
\Rightarrow t \in \widetilde{F}_{i}^{-}(\varphi_{i}(\mathcal{O})).
\end{split}
\end{equation}
Conversely, 
\begin{equation*}
\begin{split}
t \in \widetilde{F}_{i}^{-}(\varphi_{i}(\mathcal{O})) 
\Rightarrow
\widetilde{F}_{i}(t) \cap \varphi_{i}(\mathcal{O}) \neq \emptyset
\Rightarrow&
(\exists x \in X)[\varphi_{i}(x) \in \widetilde{F}_{i}(t)\text{ and }
\varphi_{i}(x) \in \varphi_{i}(\mathcal{O})] \\
\Rightarrow& 
(\exists y \in F(t)) 
(\exists z \in \mathcal{O}) 
[\varphi_{i}(x) = \varphi_{i}(y) = \varphi_{i}(z)].
\end{split}
\end{equation*}
Since $z \in \mathcal{O}$ there exists $\varepsilon >0$ such that
\begin{equation*}
\begin{split}
B_{i}(z, \varepsilon) \subset \mathcal{O}, 
\quad
B_{i}(z, \varepsilon) = \{w \in X: p_{i}(z-w) < \varepsilon\},
\end{split}
\end{equation*}
and since $p_{i}(z-y) = 0$ it follows that 
$y \in B_{i}(z, \varepsilon) \subset \mathcal{O}$ and $y \in F(t)$. 
Thus,
\begin{equation*}
\begin{split}
t \in \widetilde{F}_{i}^{-}(\varphi_{i}(\mathcal{O})) 
\Rightarrow 
\widetilde{F}_{i}(t) \cap \varphi_{i}(\mathcal{O}) \neq \emptyset
\Rightarrow
F(t) \cap \mathcal{O} \neq \emptyset 
\Rightarrow 
t \in F^{-}(\mathcal{O}). 
\end{split}
\end{equation*}
The last result together with \eqref{eq_L_iEffros.2} yields that 
\begin{equation*}
\begin{split}
t \in F^{-}(\mathcal{O}) 
\Leftrightarrow t \in \widetilde{F}_{i}^{-}(\varphi_{i}(\mathcal{O})). 
\end{split}
\end{equation*}
This means that \eqref{eq_L_iEffros.1} holds and the proof is finished.
\end{proof}


The next lemma presents a characterization of the property  $\mathcal{P}$ 
for $F$ in terms of the property  $\mathcal{P}$ for $\widetilde{F}_{i}$. 

\begin{lemma}\label{Relationship_P}
Let $F: \Omega \to 2^{X}$ be a multifunction. 
Then the following statements are equivalent:
\begin{itemize}
\item[(i)]
$F$ satisfies property  $\mathcal{P}$. 
\item[(ii)]
$F$ satisfies property  $\mathcal{P}_{i}$ for every  $i \in \mathbb{N}$. 
\item[(iii)]
$\widetilde{F}_{i}$ satisfies property  $\mathcal{P}$ for every  $i \in \mathbb{N}$.
\end{itemize}
\end{lemma}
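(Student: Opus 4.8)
The plan is to prove the chain of equivalences $(i)\Leftrightarrow(ii)$ and $(ii)\Leftrightarrow(iii)$, mirroring the structure of the proof of Lemma \ref{Relationship_Effros}, but now with the key technical input being the comparison of the three notions of diameter ($\operatorname{diam}$ coming from $d$, $\operatorname{diam}_i$ coming from $p_i$) via the estimates of Lemma \ref{L_Kothe}. The observation that makes everything work is exactly the same pair of inclusions already exploited in Lemma \ref{Relationship_Effros}: for a fixed index $k$ and any $s$, a set $D$ with $\operatorname{diam}_k(D)<1/2^s$ satisfies $\operatorname{diam}(D)<1/2^{k-1}$ by Lemma \ref{L_Kothe}(i) applied to the sequence $\alpha_i=p_i(x-y)$ for points $x,y\in D$ (noting the sequence is increasing and that $p_k$ controls the $k$-th term); and conversely a set with $\operatorname{diam}(D)<\tfrac{1}{2^k}\tfrac{1}{2^{s+1}}$ satisfies $\operatorname{diam}_k(D)<1/2^s$ by Lemma \ref{L_Kothe}(ii). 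These are exactly the translations of inclusions of balls into inclusions of small-diameter sets.

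For $(i)\Rightarrow(ii)$ I would fix $k\in\mathbb{N}$, $\varepsilon>0$ and $A\in\Sigma^+$; choose $s$ with $1/2^{s}\le\varepsilon$, apply property $\mathcal{P}$ with the smaller threshold $\tfrac{1}{2^k}\tfrac{1}{2^{s+1}}$ to get $B\in\Sigma^+_A$ and $D\subset X$ with $\operatorname{diam}(D)\le\tfrac{1}{2^k}\tfrac{1}{2^{s+1}}$ and $B\subset F^-(D)$; then Lemma \ref{L_Kothe}(ii) gives $\operatorname{diam}_k(D)\le 1/2^{s}\le\varepsilon$, so the same $B$ and $D$ witness property $\mathcal{P}_k$. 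For $(ii)\Rightarrow(i)$, fix $\varepsilon>0$ and $A\in\Sigma^+$; pick $k$ with $1/2^{k-1}\le\varepsilon$, apply property $\mathcal{P}_k$ with threshold $1/2^{k}$ to obtain $B\in\Sigma^+_A$ and $D\subset X$ with $\operatorname{diam}_k(D)\le 1/2^{k}$ and $B\subset F^-(D)$; Lemma \ref{L_Kothe}(i) then yields $\operatorname{diam}(D)\le 1/2^{k-1}\le\varepsilon$, giving property $\mathcal{P}$. One mild point of care here: Lemma \ref{L_Kothe} is stated for strict inequalities, so I would use a threshold strictly smaller than needed (e.g. $\varepsilon/2$) or observe that the closed-inequality version follows by an immediate limiting argument; I will phrase the estimates so this causes no trouble.

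For $(ii)\Leftrightarrow(iii)$ the heart is the identity $F^-(U)=\widetilde F_i^{-}(\varphi_i(U))$ for $U$ of the form $D$ or its image, together with the fact that $\varphi_i$ is a quotient map compatible with seminorms: since $\widetilde p_i(\varphi_i(x)-\varphi_i(y))=p_i(x-y)$, one has $\operatorname{diam}_i(\varphi_i(D))=\operatorname{diam}_i(D)$, and conversely given $\widetilde D\subset\widetilde X_i$ one may take $D=\varphi_i^{-1}(\widetilde D)$, whose $p_i$-diameter equals $\operatorname{diam}_i(\widetilde D)$. For the set-preimage identity I would reuse the argument from \eqref{eq_L_iEffros.1}–\eqref{eq_L_iEffros.2} almost verbatim: the inclusion $F^-(D)\subset\widetilde F_i^-(\varphi_i(D))$ is immediate, and for the reverse inclusion, if $\varphi_i(x)\in\widetilde F_i(t)\cap\varphi_i(D)$ then there are $y\in F(t)$ and $z\in D$ with $\varphi_i(y)=\varphi_i(x)=\varphi_i(z)$, hence $p_i(y-z)=0$; but then $y\in F(t)$ and $\operatorname{diam}_i(D\cup\{y\})=\operatorname{diam}_i(D)$, so $D\cup\{y\}$ is an admissible set of the same $p_i$-diameter meeting $F(t)$, which is all property $\mathcal{P}_i$ requires (unlike the open-ball case, here we need not insist $y\in D$ itself — we just enlarge $D$ by the single point $y$ without increasing its $p_i$-diameter). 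Assembling these: property $\mathcal{P}_i$ for $F$ with data $(B,D)$ gives property $\mathcal{P}$ for $\widetilde F_i$ with data $(B,\varphi_i(D))$, and property $\mathcal{P}$ for $\widetilde F_i$ with data $(B,\widetilde D)$ gives property $\mathcal{P}_i$ for $F$ with data $(B,\varphi_i^{-1}(\widetilde D))$.

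The main obstacle I anticipate is purely bookkeeping: keeping the three diameters and the two directions of the Lemma \ref{L_Kothe} estimates straight, and handling the strict-versus-non-strict inequality mismatch cleanly. There is no deep difficulty — the proof is a direct transcription of Lemma \ref{Relationship_Effros} with "open balls of radius $r$" replaced by "sets of diameter $\le\varepsilon$", and the only genuinely new (though trivial) remark is that $\varphi_i$ preserves $p_i$-diameters exactly, so the quotient step costs nothing.
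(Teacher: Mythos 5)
Your proposal is correct and follows essentially the same route as the paper: both directions of $(i)\Leftrightarrow(ii)$ use exactly the same choice of thresholds $\tfrac{1}{2^k}\tfrac{1}{2^{s+1}}$ and $\tfrac{1}{2^k}$ together with the two parts of Lemma \ref{L_Kothe} applied pairwise to $\alpha_i=p_i(x-y)$. The only difference is that for $(ii)\Leftrightarrow(iii)$ the paper simply declares the equivalence immediate from Definition \ref{D_Property_P}, whereas you supply the (correct) details via $D=\varphi_i^{-1}(\widetilde D)$ and the fact that $\varphi_i$ preserves $p_i$-diameters.
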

\begin{proof}
$(i) \Rightarrow(ii)$
Assume that $F$ satisfies property  $\mathcal{P}$ 
and let $\varepsilon >0$, $A \in \Sigma^{+}$ and $k \in \mathbb{N}$. 
Then, there exists $s \in \mathbb{N}$ such that  
$\frac{1}{2^{s}} < \varepsilon$. 
Hence, there exist $B \in \Sigma^{+}_{A}$ and $D \subset X$ with 
$\text{diam}(D) < \frac{1}{2^{k}}\frac{1}{2^{s+1}}$ 
such that 
\begin{equation*} 
F(t) \cap D \neq \emptyset \quad\text{for all }t \in B.
\end{equation*} 
Thanks to Lemma \ref{L_Kothe} it follows that 
$\text{diam}_{k}(D) \leq \frac{1}{2^{s}} < \varepsilon$.
Thus, $F$ satisfies property  $\mathcal{P}_{k}$.

$(ii) \Rightarrow(i)$ 
Assume that $(ii)$ holds and 
let  $\varepsilon >0$  and $A \in \Sigma^{+}$.  
There exists $k \in \mathbb{N}$ such that 
$\frac{1}{2^{k-1}} < \varepsilon$, 
and since $F$ satisfies property  $\mathcal{P}_{k}$ 
there exist $B \in \Sigma^{+}_{A}$ and $D \subset X$ with 
$\text{diam}_{k}(D) < \frac{1}{2^{k}}$ 
such that 
\begin{equation*}
F(t) \cap D \neq \emptyset \quad\text{for all }t \in B. 
\end{equation*}
The last inequality together with Lemma \ref{L_Kothe} yields that 
$\text{diam}(D) \leq \frac{1}{2^{k-1}} <  \varepsilon$. 
This means that $F$ satisfies property  $\mathcal{P}$.

By Definition \ref{D_Property_P} it follows that 
$(ii)$ and $(iii)$ are equivalent, and this ends the proof.
\end{proof}


\begin{lemma}\label{L_KeyLemma}
Let $(C_{i})$ be a sequence of nonempty compact  subsets $C_{i} \subset X$ 
with respect to $p_{i}$ 
such that 
\begin{equation*}
\begin{split}
C_{i} \supset C_{i+1}
\quad
\text{for every }i \in \mathbb{N}.
\end{split}
\end{equation*}
Then
\begin{equation*}
\begin{split}
\bigcap_{i \in \mathbb{N}} C_{i} \neq \emptyset.
\end{split}
\end{equation*}
\end{lemma}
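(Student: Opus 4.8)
The plan is a diagonal–subsequence argument: use the compactness of each $C_i$ with respect to $p_i$ to extract convergent subsequences, then the completeness of $(X,d)$ to produce a point lying in every $C_i$.

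First I would pick, for each $i \in \mathbb{N}$, a point $x_i \in C_i$ (possible since $C_i \neq \emptyset$); since the sequence $(C_i)$ is decreasing, $x_j \in C_i$ whenever $j \geq i$. Then I would iterate an extraction. As $C_1$ is compact with respect to $p_1$ and $(x_j)_{j \geq 1} \subset C_1$, there is an infinite $N_1 \subset \mathbb{N}$ along which $(x_j)$ is $p_1$-convergent to some point of $C_1$; as $C_2$ is compact with respect to $p_2$ and $(x_j)_{j \in N_1,\, j \geq 2} \subset C_2$, there is an infinite $N_2 \subset N_1$, with all elements $\geq 2$, along which $(x_j)$ is $p_2$-convergent to a point of $C_2$; continuing, one obtains $N_1 \supset N_2 \supset \cdots$ such that, for each $k$, the sequence $(x_j)_{j \in N_k}$ is $p_i$-Cauchy for every $i \leq k$.

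Next comes the diagonalization: let $j_k$ be the $k$-th smallest element of $N_k$, so that $j_1 < j_2 < \cdots$ and $j_k \geq k$. For every fixed $i$, the tail $(x_{j_k})_{k \geq i}$ is a subsequence of $(x_j)_{j \in N_i}$, hence $p_i$-Cauchy; thus $(x_{j_k})_k$ is $p_i$-Cauchy for every $i \in \mathbb{N}$. From the defining formula for $d$ one checks routinely that such a sequence is $d$-Cauchy: split the series $\sum_i 2^{-i} p_i(\cdot)/(1+p_i(\cdot))$ at a large index to make the tail small, and use that the first finitely many of the $p_i$ are simultaneously small along a sufficiently far tail. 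Since $(X,d)$ is complete, $(x_{j_k})$ converges in $\tau = \tau_d$ to some $x \in X$, i.e. $p_i(x_{j_k} - x) \to 0$ for every $i$.

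It remains to see that $x \in C_i$ for every $i$. Fix $i$; for $k \geq i$ we have $j_k \geq k \geq i$, hence $x_{j_k} \in C_i$, while $p_i(x_{j_k} - x) \to 0$. Invoking once more the compactness of $C_i$ with respect to $p_i$, a further subsequence of $(x_{j_k})$ is $p_i$-convergent to some $c \in C_i$, so that $p_i(x - c) = 0$; from this one infers $x \in C_i$, and intersecting over $i$ yields $x \in \bigcap_{i \in \mathbb{N}} C_i$, so this set is nonempty. The step I expect to be the main obstacle is precisely the passage from $p_i(x-c)=0$ to $x \in C_i$: since $p_i$ is only a seminorm, this implication is not automatic for an arbitrary subset, so it is here that the compactness of $C_i$ with respect to $p_i$ (together with the appropriate closedness of $C_i$) has to be exploited carefully; the earlier extraction and completeness steps are routine.
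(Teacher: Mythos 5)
Your argument is the same diagonal--subsequence scheme as the paper's proof: pick $x_i\in C_i$, extract nested $p_i$-convergent subsequences, diagonalize, use completeness of $(X,d)$ to produce a limit $x$, and then try to place $x$ in each $C_i$. The only cosmetic difference is that the paper runs the completeness argument on the sequence of subsequential limits $y_s\in C_s$ rather than on the diagonal sequence itself; up to the last step the two are interchangeable.

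The step you flag as the main obstacle is indeed the crux, and it is a genuine gap that cannot be closed from the stated hypotheses alone: in a semimetric space a compact set need not be closed, and $p_i(x-c)=0$ with $c\in C_i$ does not force $x\in C_i$. In fact the lemma as stated is false. Take $X=\mathbb{R}^{\mathbb{N}}$ with $p_i(x)=\max_{j\le i}|x_j|$ and $C_i=\{e_j: j>i\}$, where $e_j$ is the $j$-th coordinate vector. These sets are nonempty and decreasing, and each $C_i$ is $p_i$-compact because $p_i$ vanishes identically on $C_i-C_i$ (any single ball from an open cover already contains all of $C_i$); yet $\bigcap_i C_i=\emptyset$. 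Your construction applied to $x_i=e_{i+1}$ produces the limit $x=0$, which lies in no $C_i$, and this is exactly where your final implication breaks down. The missing hypothesis is that each $C_i$ be closed in the semimetric topology $\tau_i$ (equivalently, saturated with respect to $p_i^{-1}(0)$); with it your last step becomes immediate, since $x$ is then a $p_i$-limit of points of $C_i$ and hence $x\in\overline{C_i}^{\,p_i}=C_i$, without any need for the auxiliary point $c$. This extra hypothesis does hold where the lemma is invoked in Theorem \ref{T_Main}, because there $C_i=\overline{H_i(t)}^{\,p_i}$ is $p_i$-closed by construction. Note that the paper's own proof tacitly makes the same assumption: its final display uses $\overline{A_i}^{\,p_i}\subset C_i$ for $A_i\subset C_i$, which again requires $C_i$ to be $p_i$-closed. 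So your proof is the intended one, but you should state and use the closedness of the $C_i$ explicitly; as written, neither your argument nor the paper's establishes the lemma in the generality claimed.
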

\begin{proof}
Since each $C_{i}$ is nonempty we can choose $x_{i} \in C_{i}$ for every $i \in \mathbb{N}$. 
Then $(x_{i})$ is a sequence with terms in $C_{1}$. 
From the fact that $C_{1}$ is a compact subset in $(X, p_{1})$ it follows that there exists 
a subsequence $(x_{k_{1}(i)})$ of $(x_{i})$ and $y_{1} \in C_{1}$ such that 
\begin{equation*}
\begin{split}
\lim_{i \to \infty } p_{1}(x_{k_{1}(i)} - y_{1}) =0.
\end{split}
\end{equation*}
We have that $(x_{k_{1}(i)})_{i>1}$ is a sequence with terms in $C_{2}$. 
From the fact that $C_{2}$ is a compact subset in $(X, p_{2})$ it follows that there exists 
a subsequence $(x_{k_{2}(i)})$ of $(x_{k_{1}(i)})_{i>1}$ and $y_{2} \in C_{2}$ such that 
\begin{equation*}
\begin{split}
\lim_{i \to \infty } p_{2}(x_{k_{2}(i)} - y_{2}) =0. 
\end{split}
\end{equation*}
The sequence $(x_{k_{2}(i)})_{i>2}$ is  with terms in $C_{3}$. 
Again, from the fact that $C_{3}$ is a compact subset in $(X, p_{3})$ 
it follows that there exists 
a subsequence $(x_{k_{3}(i)})$ of $(x_{k_{2}(i)})_{i>2}$ 
and $y_{3} \in C_{3}$ such that 
\begin{equation*}
\begin{split}
\lim_{i \to \infty } p_{3}(x_{k_{3}(i)} - y_{3}) =0.
\end{split}
\end{equation*}
By proceeding inductively in this fashion we obtain a sequence  
$(y_{s})$ with $y_{s} \in C_{s}$ and  
a sequence $(x_{k_{s}(i)})$ with terms in $C_{s}$ 
such that 
\begin{equation*}
\begin{split}
\lim_{i \to \infty } p_{s}(x_{k_{s}(i)} - y_{s}) =0
\quad
\text{for every }s \in \mathbb{N}. 
\end{split}
\end{equation*}
Since
\begin{equation*}
\begin{split}
s' \leq s'' 
\Rightarrow& 
\lim_{i \to \infty } p_{s'}(x_{k_{s''}(i)} - y_{s'}) =0 
\text{ and }
\lim_{i \to \infty } p_{s'}(x_{k_{s''}(i)} - y_{s''}) =0 
\\
\Rightarrow&
p_{s'}(y_{s'}-y_{s''}) =0
\end{split}
\end{equation*}
it follows that $(y_{s})$ 
is a Cauchy sequence in the complete metric space $(X,\tau)$. 
Consequently, $(y_{s})$ converges to 
a point $y_{0}$ in $(X,\tau)$. 
Since $y_{0}$ is a limit point of $(x_{i})$ we have 
\begin{equation*}
\begin{split}
y_{0} \in 
\bigcap_{i \in \mathbb{N}} \overline{A_{i}}^{\tau} 
\subset 
\bigcap_{i \in \mathbb{N}} \overline{A_{i}}^{p_{i}}
\subset 
\bigcap_{i \in \mathbb{N}} C_{i},
\end{split}
\end{equation*}
where 
$A_{i} = \{x_{j}: j \geq i \}$, 
and this ends the proof.
\end{proof}


\begin{lemma}\label{L_KKeyLemma}
Let $(C_{i})$ be a sequence of nonempty compact  subsets 
$C_{i} \subset X$ with respect to $p_{i}$ 
such that 
\begin{equation*}
\begin{split}
C_{i} \supset C_{i+1}
\quad
\text{for every }i \in \mathbb{N}
\end{split}
\end{equation*}
and let $C =\bigcap_{i \in \mathbb{N}} C_{i}$. 
Then for each $k \in \mathbb{N}$ and $x \in X$ the following equality holds 
\begin{equation*}
\begin{split}
\text{dist}_{k}(x, C) = \inf_{i \geq k} \text{dist}_{k}(x, C_{i}), 
\end{split}
\end{equation*}
where 
$\text{dist}_{k}(x, A) = \inf \{p_{k}(x-y): y \in A\}, A \subset X$. 
\end{lemma}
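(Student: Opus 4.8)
The plan is to prove the equality by the two obvious inequalities, the non-trivial one being obtained by a diagonal extraction in the spirit of Lemma~\ref{L_KeyLemma}. One inequality is immediate: since $C\subseteq C_i$ for every $i$, we get $\text{dist}_k(x,C)\ge\text{dist}_k(x,C_i)$ for every $i\ge k$, so $\text{dist}_k(x,C)$ dominates the right-hand side. Note that $i\mapsto\text{dist}_k(x,C_i)$ is non-decreasing (the $C_i$ shrink), so for the chain $C_k\supseteq C_{k+1}\supseteq\cdots$ the right-hand side is really the limit $\rho:=\lim_{i\to\infty}\text{dist}_k(x,C_i)$; moreover, by Lemma~\ref{L_KeyLemma} the set $C$ is non-empty, and fixing $c\in C$ gives $\text{dist}_k(x,C_i)\le p_k(x-c)<\infty$, so $\rho$ is a finite non-negative real.

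For the reverse inequality I must produce $y_0\in C$ with $p_k(x-y_0)\le\rho$. The point I would exploit first is that, for $i\ge k$, compactness of $C_i$ with respect to $p_i$ together with $p_k\le p_i$ forces $C_i$ to be compact also with respect to $p_k$ (the identity $(X,p_i)\to(X,p_k)$ is continuous, and a continuous image of a compact set is compact). Hence for each $i\ge k$ the continuous function $y\mapsto p_k(x-y)$ attains its infimum on $C_i$, and I may choose $y_i\in C_i$ with $p_k(x-y_i)=\text{dist}_k(x,C_i)\le\rho$. Next I would run the inductive construction of Lemma~\ref{L_KeyLemma} on the sequence $(y_i)_{i\ge k}$: inside $C_k$, which is $p_k$-compact, pass to a subsequence $(y_{m_j})$ that $p_k$-converges to some $z_k\in C_k$; inside $C_{k+1}$, which is $p_{k+1}$-compact, pass to a further subsequence that $p_{k+1}$-converges to $z_{k+1}\in C_{k+1}$; and so on. This produces points $z_s\in C_s$ ($s\ge k$) with $p_{s'}(z_{s'}-z_{s''})=0$ whenever $s'\le s''$, hence a $\tau$-Cauchy sequence $(z_s)_{s\ge k}$; let $y_0$ be its $\tau$-limit. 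As in Lemma~\ref{L_KeyLemma}, $y_0\in\bigcap_s\overline{A_s}^{\tau}\subseteq\bigcap_s\overline{A_s}^{p_s}\subseteq\bigcap_s C_s=C$, where $A_s$ is the appropriate tail, so $y_0\in C$.

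It then remains to evaluate $p_k(x-y_0)$. From $p_k(z_k-z_s)=0$ for $s\ge k$ and $p_k(z_s-y_0)\to 0$ (which follows from $\tau$-convergence) one gets $p_k(z_k-y_0)=0$, hence $p_k(x-y_0)=p_k(x-z_k)$. Along the subsequence $(y_{m_j})$ chosen to $p_k$-converge to $z_k$ we have $p_k(y_{m_j}-z_k)\to 0$, so the reverse triangle inequality gives $p_k(x-y_{m_j})\to p_k(x-z_k)=p_k(x-y_0)$; but $p_k(x-y_{m_j})=\text{dist}_k(x,C_{m_j})$ is a subsequence of $(\text{dist}_k(x,C_i))_i$, which converges to $\rho$, so $p_k(x-y_0)=\rho$. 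As $y_0\in C$, this yields $\text{dist}_k(x,C)\le\rho$, and with the first inequality we conclude $\text{dist}_k(x,C)=\rho$, the asserted identity.

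The step I expect to be the main obstacle is precisely the handling of the pseudometric nature of $(X,p_k)$: since $p_k$ is only a seminorm, $p_k$-limits need not be unique and $p_k$-compact sets need not be $p_k$-closed, so one cannot simply take a $p_k$-cluster point of $(y_i)$ in $C_k$ and claim it lies in $C$. Routing the argument through the genuinely Cauchy sequence $(z_s)$ in the complete metric space $(X,\tau)$, exactly as in Lemma~\ref{L_KeyLemma}, is what repairs this, at the price of having to keep careful track of which seminorm controls which convergence when finally computing $p_k(x-y_0)$.
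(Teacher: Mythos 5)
Your argument is essentially sound, but it silently changes the statement at its very first step, and that change cannot be avoided because the statement as printed is false. Since the $C_i$ decrease, the map $i \mapsto \mathrm{dist}_k(x,C_i)$ is \emph{non-decreasing}, as you yourself observe; hence $\inf_{i\ge k}\mathrm{dist}_k(x,C_i)=\mathrm{dist}_k(x,C_k)$, whereas the quantity $\rho=\lim_{i\to\infty}\mathrm{dist}_k(x,C_i)$ that you substitute for it is the \emph{supremum}. These differ in general, and so does the lemma's left-hand side from the printed right-hand side: take $X=\mathbb{R}^2$ with $p_1(a,b)=|a|$ and $p_i(a,b)=|a|+|b|$ for $i\ge 2$, and put $C_1=\{(0,0),(1,0)\}$, $C_i=\{(1,0)\}$ for $i\ge 2$, $x=(0,0)$, $k=1$; then $\mathrm{dist}_1(x,C)=1$ while $\inf_{i\ge 1}\mathrm{dist}_1(x,C_i)=0$. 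So your sentence ``the right-hand side is really the limit'' is incorrect, but what you then prove, namely $\mathrm{dist}_k(x,C)=\sup_{i\ge k}\mathrm{dist}_k(x,C_i)=\lim_i\mathrm{dist}_k(x,C_i)$, is the correct statement; it is also the one the paper's own proof actually targets (the choice of $x_s\in C_{k+s}$ with $p_k(x-x_s)<v+\varepsilon 2^{-s}$ is only possible when $v$ dominates every $\mathrm{dist}_k(x,C_{k+s})$, i.e.\ when $v$ is the supremum), and the one that suffices in Theorem \ref{T_Main}, where $h_y=\lim_i h_y^i$ is measurable just as well as an infimum would be.

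Granting the corrected statement, your proof is correct and in fact more careful than the paper's on the one genuinely delicate point. The paper extracts a single $p_k$-convergent subsequence $x_{s_j}\to x_0\in C_k$ and then asserts $x_0\in\bigcap_j C_{k+s_j}$, which does not follow: $p_k$-limits are not unique and the sets $C_{k+s_j}$ need not be $p_k$-closed. Your rerouting through the diagonal extraction of Lemma \ref{L_KeyLemma}, producing the $\tau$-Cauchy sequence $(z_s)$ and its $\tau$-limit $y_0\in C$ with $p_k(x-y_0)=p_k(x-z_k)=\rho$, is exactly the right repair (it still leans on the inclusion $\bigcap_s\overline{A_s}^{p_s}\subset\bigcap_s C_s$ from Lemma \ref{L_KeyLemma}, which tacitly uses that a $p_s$-compact set is $p_s$-closed; that is the paper's debt, not yours). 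Net assessment: you have proved the true, intended identity, but as a proof of the literal printed identity the argument fails at the identification of the infimum with the limit --- necessarily so, since no proof of the printed identity exists.
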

\begin{proof}
Fix an integer $k \in \mathbb{N}$ and write
\begin{equation*}
\begin{split}
u = \text{dist}_{k}(x, C), \quad 
v =  \inf_{i \geq k} \text{dist}_{k}(x, C_{i}).
\end{split}
\end{equation*}
Since $C =\bigcap_{i \geq k} C_{i}$ we obtain $u \geq v$. 
On the other hand, 
given $\varepsilon >0$  
there exists a sequence  $(x_{s})$ with $x_{s} \in C_{k+s}$ such that 
\begin{equation}\label{eqL_KKeyLemma.1}
\begin{split}
v \leq p_{k}(x - x_{s}) < v + \frac{\varepsilon}{2^{s}}, 
\qquad
s = 0, 1,2,3, \dotsc
\end{split}
\end{equation}
From the fact that $(x_{s})$ is a sequence with terms in the compact set $C_{k}$ 
it follows that there is a subsequence $(x_{s_{j}})$ and $x_{0} \in C_{k}$ such that 
\begin{equation}\label{eqL_KKeyLemma.2}
\begin{split}
\lim_{j \to \infty} p_{k}(x_{s_{j}}- x_{0}) =0.
\end{split}
\end{equation}
Consequently,
\begin{equation*}
\begin{split}
x_{0} \in \bigcap_{j \in \mathbb{N}} C_{k+s_{j}} = C
\end{split}
\end{equation*}
and by 
\eqref{eqL_KKeyLemma.1} and \eqref{eqL_KKeyLemma.2} 
we obtain also $p_{k}(x - x_{0}) \leq v$, 
since
\begin{equation*}
\begin{split}
p_{k}(x - x_{0}) \leq p_{k}(x - x_{s_{j}}) + p_{k}(x_{s_{j}}- x_{0}). 
\end{split}
\end{equation*}
It follows that $u \leq v$, 
and since $u \geq v$ we infer that $u=v$, 
and this ends the proof. 
\end{proof}


We are now ready to present the main result.

\begin{theorem}\label{T_Main}
Let $F: \Omega \to ck(X)$ be 
a multifunction. 
Then the following statements are equivalent:
\begin{itemize}
\item[(i)]
$F$ admits a measurable by seminorm selector,
\item[(ii)]
$F$ satisfies the property $\mathcal{P}$,
\item[(iii)]
There exist a set of measure zero $\Omega_{0} \in \Sigma$, 
a separable subspace $Y \subset X$ 
and a multifunction $G : \Omega \setminus \Omega_{0} \to ck(Y)$ 
that is Effros measurable and such that $G(t) \subset F(t)$ for every 
$t \in \Omega \setminus \Omega_{0}$. 
\end{itemize}  
\end{theorem}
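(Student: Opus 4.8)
The plan is to prove the cycle $(i)\Rightarrow(ii)\Rightarrow(iii)\Rightarrow(i)$, transferring everything to the quotient Banach spaces $\overline{X}_i$, where the Cascales--Kadets--Rodr\'{i}guez theorem is available. For $i\le j$ let $\pi_{i,j}\colon\overline{X}_j\to\overline{X}_i$ be the canonical linking operator (so $\varphi_i=\pi_{i,j}\circ\varphi_j$ and $\widetilde{p}_i(\pi_{i,j}\,\cdot\,)\le\widetilde{p}_j(\cdot)$), and let $\iota\colon X\to\prod_{i}\widetilde{X}_i$, $\iota(x)=(\varphi_i(x))_i$, which is a homeomorphism onto its image since $\tau$ is the initial topology of the family $(\varphi_i)$ and $\bigcap_i p_i^{-1}(0)=\{0\}$; in particular a subset of $X$ is $\tau$-separable whenever its $\iota$-image lies in a separable subset of $\prod_i\widetilde{X}_i$. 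First I would record that each $\widetilde{F}_i(t)=\varphi_i(F(t))$ is convex and $\widetilde{p}_i$-compact, hence weakly compact in $\overline{X}_i$, so the quoted theorem applies to every $\widetilde{F}_i\colon\Omega\to wk(\overline{X}_i)$, and that for multifunctions into $\overline{X}_i$ property $\mathcal{P}$ of Definition~\ref{D_Property_P}(ii) coincides with property $(P)$ of that theorem (any small $D\subset\overline{X}_i$ may be replaced by $D\cap\widetilde{X}_i$ without changing $\widetilde{F}_i^{-}(D)$).

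The two outer implications are routine. For $(i)\Rightarrow(ii)$: if $f$ is a measurable by seminorm selector, then simple functions $p_i$-approximating $f$ push forward under $\varphi_i$ to simple functions $\widetilde{p}_i$-approximating $\varphi_i\circ f$, so $\varphi_i\circ f$ is a strongly measurable selector of $\widetilde{F}_i$; the quoted theorem gives property $(P)$ for every $\widetilde{F}_i$, and Lemma~\ref{Relationship_P} upgrades this to property $\mathcal{P}$ for $F$. For $(iii)\Rightarrow(i)$: $\overline{Y}^{\tau}$ is a closed, hence complete, separable subspace of $X$, and $G$ is a closed-valued Effros measurable multifunction into this Polish space; Kuratowski--Ryll-Nardzewski yields a strongly measurable selector of $G$, which one extends over $\Omega_0$ by arbitrary points of $F$ to obtain a selector of $F$ that agrees off a null set with a strongly measurable function, hence is itself strongly measurable and, a fortiori, measurable by seminorm.

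The substantial part is $(ii)\Rightarrow(iii)$. By Lemma~\ref{Relationship_P} each $\widetilde{F}_i$ has property $(P)$, so for every $N\in\mathbb{N}$ the quoted theorem furnishes a strongly measurable selector $g^N$ of $\widetilde{F}_N$; pushing down, $h^N_i:=\pi_{i,N}\circ g^N$ is, for $i\le N$, a strongly measurable selector of $\widetilde{F}_i$ (because $\pi_{i,N}(\widetilde{F}_N(t))=\widetilde{F}_i(t)$). I would glue these into a coherent tower by setting
\begin{equation*}
C_i(t):=\overline{\operatorname{co}}^{\,\widetilde{p}_i}\{h^N_i(t):N\ge i\}\subset\widetilde{F}_i(t).
\end{equation*}
Each $C_i$ is convex- and $\widetilde{p}_i$-compact-valued, lies (off a fixed null set $\Omega_0$) in a fixed separable subset $S_i\subset\widetilde{X}_i$, and is Effros measurable, because $t\mapsto\overline{\{h^N_i(t):N\ge i\}}$ is Effros measurable (its lower inverse of an open set is $\bigcup_{N\ge i}(h^N_i)^{-1}(\cdot)$) and passing to closed convex hulls preserves Effros measurability in a separable Banach space. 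The key point is \emph{coherence}: since $\{N\ge i\}\subset\{N\ge i-1\}$ and $\pi_{i-1,i}\circ h^N_i=h^N_{i-1}$, one gets $\pi_{i-1,i}(C_i(t))\subset C_{i-1}(t)$. Next I would pull back and intersect: put $E_n(t):=F(t)\cap\bigcap_{i\le n}\varphi_i^{-1}(C_i(t))$; coherence forces $E_n(t)=F(t)\cap\varphi_n^{-1}(C_n(t))$, which is nonempty ($\emptyset\neq C_n(t)\subset\varphi_n(F(t))$), $\tau$-compact, and decreasing in $n$, so by Lemma~\ref{L_KeyLemma} the set $G(t):=\bigcap_nE_n(t)$ is nonempty; it is $\tau$-compact and convex and $G(t)\subset F(t)$. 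Since $\varphi_i(G(t))\subset C_i(t)\subset S_i$ for all $i$, the set $\bigcup_{t\notin\Omega_0}G(t)$ has $\iota$-image in the separable set $\prod_iS_i$, hence $Y:=\overline{\operatorname{span}}\bigl(\bigcup_{t\notin\Omega_0}G(t)\bigr)$ is a separable subspace of $X$ containing every $G(t)$. Finally, for $n\ge k$ one has $\varphi_k(E_n(t))=\pi_{k,n}(C_n(t))$, so $\text{dist}_k(x,E_n(t))=\text{dist}_{\widetilde{p}_k}(\varphi_k(x),\pi_{k,n}(C_n(t)))$ is measurable in $t$; Lemma~\ref{L_KKeyLemma}, applied to the decreasing sequence of $p_n$-compact sets $(E_n(t))_n$, gives $\text{dist}_k(x,G(t))=\inf_{n\ge k}\text{dist}_k(x,E_n(t))$, whence all these distance functions are measurable, and since $G$ is $Y$-valued this forces $G^{-}(\mathcal{O})\in\Sigma$ for every $\mathcal{O}\in\tau$. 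Thus $G$ is Effros measurable, which establishes $(iii)$.

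The real obstacle is this gluing. Selectors of the $\widetilde{F}_i$ chosen independently need not be compatible with the linking maps, and pulling back and intersecting their fibres in $X$ may well give the empty set; moreover one cannot simply shrink $F$ and re-apply property $\mathcal{P}$, since $\mathcal{P}$ is not inherited by submultifunctions in general (for instance $t\mapsto[0,e_t]$ may have property $\mathcal{P}$ through the constant point $0$ while $t\mapsto\{e_t\}$ does not). The device that resolves it is to build the tower $(C_i)$ from \emph{closed convex hulls of the tails} $\{h^N_i:N\ge i\}$ of the pushed-down selectors, which makes coherence automatic from the nesting of the index sets, and then to read off non-emptiness and Effros measurability of the pulled-back intersection from Lemmas~\ref{L_KeyLemma} and~\ref{L_KKeyLemma} respectively; taking closed convex hulls also supplies the convexity of $G(t)$ demanded by $ck(Y)$.
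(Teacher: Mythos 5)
Your proposal is correct, and for the two easy implications it coincides with the paper's argument (push a measurable-by-seminorm selector through $\varphi_i$ and apply Cascales--Kadets--Rodr\'{i}guez plus Lemma~\ref{Relationship_P}; use Kuratowski--Ryll-Nardzewski and transfer back for $(iii)\Rightarrow(i)$). The overall architecture of $(ii)\Rightarrow(iii)$ is also the same --- apply the quoted theorem in each quotient, build a coherent nested tower of $p_i$-compacta from \emph{tails} over the index $i$, intersect, and then use Lemma~\ref{L_KeyLemma} for non-emptiness and Lemma~\ref{L_KKeyLemma} together with distance functions and Lemmas~\ref{Relationship_Effros} for Effros measurability --- but your realization of the tower is genuinely different. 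The paper invokes statement (iii) of the Cascales--Kadets--Rodr\'{i}guez theorem to get Effros measurable submultifunctions $\widetilde{G}_i\subset\widetilde{F}_i$, pulls them back to $G_i=\varphi_i^{-1}(\widetilde{G}_i)$, and sets $G(t)=\bigcap_i\overline{\bigcup_{j\ge i}G_j(t)}^{\,p_i}$, so coherence comes from the nesting of unions of tails; you instead invoke statement (i) to get strongly measurable selectors $g^N$ of $\widetilde{F}_N$, push them down the linking maps, and take closed convex hulls of the tails $\{h^N_i:N\ge i\}$ before pulling back and intersecting with $F(t)$. Your variant has two small advantages: the sets $E_n(t)$ are convex by construction, so $G(t)\in ck(Y)$ comes for free, whereas the paper's $G(t)$ is only compact and its closing remark ``we can assume $G(t)\in ck(Y)$'' really requires passing to closed convex hulls a posteriori; and intersecting with $F(t)$ from the outset makes $G(t)\subset F(t)$ immediate rather than requiring the separate identity $\bigcap_iF_i(t)=F(t)$. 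The price is that your measurability argument for $t\mapsto\operatorname{dist}_k(x,E_n(t))$ leans on the (standard, but unstated in the paper) fact that distances to closed convex hulls of countably many measurable selectors are measurable, where the paper gets the corresponding fact directly from the Effros measurability of the $\widetilde{G}_j$. Both routes are sound.
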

\begin{proof}
$(i) \Rightarrow (ii)$  
Assume that $F$ admits a measurable by seminorm selector $f: \Omega \to X$. 
Then the multifunction 
$\widetilde{F}_{i} : \Omega \to ck(\widetilde{X}_{i})$ 
admits a strongly measurable selector $\varphi_{i} \circ f$. 
Since $ck(\widetilde{X}_{i}) \subset wk(\widetilde{X}_{i})$,   
by \cite[Theorem 2.5]{CASC2} 
each multifunction $\widetilde{F}_{i}$ satisfies the property $\mathcal{P}$.
Further by Lemma \ref{Relationship_P} it follows that $F$ 
satisfies the property $\mathcal{P}$.

$(ii) \Rightarrow (iii)$ 
Assume that $F$  
satisfies the property $\mathcal{P}$. 
Then by Lemma \ref{Relationship_P} each multifunction 
$\widetilde{F}_{i}$ satisfies the property $\mathcal{P}$. 
Therefore by \cite[Theorem 2.5]{CASC2}  
there exist a measurable set $\Omega_{i} \in \Sigma$ with $\mu(\Omega_{i})=0$, 
a separable subspace $\widetilde{Y}_{i} \subset \widetilde{X}_{i}$ 
and a multifunction 
$\widetilde{G}_{i} : \Omega \setminus \Omega_{i} \to wk(\widetilde{Y}_{i})$ 
that is Effros measurable and such that 
$\widetilde{G}_{i}(t) \subset \widetilde{F}_{i}(t)$ for every 
$t \in \Omega \setminus \Omega_{i}$. 

From the fact that $\varphi_{i}$ is an open map it follows that 
\begin{equation*}
\begin{split}
F_{i}(t) =\varphi_{i}^{-1}[\widetilde{F}_{i}(t)] 
= \{x \in X : \varphi_{i}(x) \in \widetilde{F}_{i}(t)\},
\qquad
t \in \Omega
\end{split}
\end{equation*}
is a compact subset of the semimetric space $(X,p_{i})$. 
Since $\varphi_{i}$ is also an isometric map the set
\begin{equation*}
\begin{split}
Y_{i} = \varphi_{i}^{-1}(\widetilde{Y}_{i}) 
= \{x \in X : \varphi_{i}(x) \in \widetilde{Y}_{i} \}
\end{split}
\end{equation*}
is a separable subset of $(X,p_{i})$.

We can define the multifunctions 
\begin{equation*}
\begin{split}
G_{i} : \Omega \setminus \Omega_{i} \to 2^{Y_{i}}, 
\quad
G_{i}(t) =\varphi_{i}^{-1}[\widetilde{G}_{i}(t)] 
= \{x \in Y_{i} : \varphi_{i}(x) \in \widetilde{G}_{i}(t)\},
\end{split}
\end{equation*}
and
\begin{equation}\label{eq_T_Main.1}
\begin{split}
G : \Omega \setminus \Omega_{0} \to 2^{Y},
\quad
G(t) = \bigcap_{i=1}^{+\infty} \overline{H_{i}(t)}^{p_{i}},
\quad
H_{i}(t) = \bigcup_{j \geq i} G_{j}(t),
\end{split}
\end{equation}
where  
\begin{equation*}
\begin{split}
\Omega_{0} = \bigcup_{i=1}^{+\infty} \Omega_{i}, 
\quad
Y = \overline{\text{sp}}^{\tau}
\left ( 
U
\right ),
\quad
U = \bigcup_{i=1}^{+\infty}  \overline{Y_{i}}^{p_{i}},
\end{split}
\end{equation*}
and $\overline{\text{sp}}^{\tau}(U)$ 
is the closed subspace spanned by $U$.  
It is easy to see that $\mu(\Omega_{0})=0$ and $Y$ is a separable subspace of $(X, \tau)$.

$\bullet$ 
We claim that $G(t) \neq \emptyset$ for any $t \in \Omega \setminus \Omega_{0}$. 
Indeed, 
since each $F_{i}(t)$ is a compact subset of $(X,p_{i})$ 
and 
\begin{equation*}
\begin{split}
F_{i}(t) \supset F_{i+1}(t)
\end{split}
\end{equation*}
it follows that for each $i \in \mathbb{N}$, we have 
\begin{equation*}
\begin{split}
\overline{H_{i}(t)}^{p_{i}}  \subset \overline{F_{i}(t)}^{p_{i}} = F_{i}(t).
\end{split}
\end{equation*}
Thus 
each $\overline{H_{i}(t)}^{p_{i}}$ is a nonempty compact subset of $(X,p_{i})$. 
Consequently, 
we obtain by \eqref{eq_T_Main.1} and Lemma \ref{L_KeyLemma} 
that  $G(t) \neq \emptyset$.

$\bullet$ 
For each $t \in \Omega$ we will prove 
\begin{equation*}
\begin{split} 
\bigcap_{i \in \mathbb{N}} F_{i}(t) = F(t),
\end{split}
\end{equation*}
Form the fact that $F_{i}(t) \supset F(t)$ 
it follows that 
\begin{equation*}
\begin{split} 
\bigcap_{i \in \mathbb{N}} F_{i}(t) \supset F(t).
\end{split}
\end{equation*}
To see the converse inclusion we consider 
$x \not\in F(t)$. 
Then, since $F(t)$ is a closed set in $(X, \tau)$ 
there is $i_{0} \in \mathbb{N}$ and $\varepsilon_{0}>0$ such that 
\begin{equation*}
\begin{split}
B_{p_{i_{0}}}(x, \varepsilon_{0}) \bigcap F(t) = \emptyset, 
\quad
B_{p_{i_{0}}}(x, \varepsilon_{0}) 
= \{ y \in X: p_{i_{0}}(x - y) < \varepsilon_{0}\}.
\end{split}
\end{equation*}
Consequently, 
$x \not \in \bigcap_{i \in \mathbb{N}} F_{i}(t)$, 
since $x \not \in F_{i_{0}}(t)$.

$\bullet$ 
Given any $t \in \Omega \setminus \Omega_{0}$ we  claim that 
\begin{equation*}
\begin{split}
G(t) \subset F(t). 
\end{split}
\end{equation*}
Indeed, since
\begin{equation*}
\begin{split}
H_{i}(t) \subset F_{i}(t)
\end{split}
\end{equation*}  
it follows that 
\begin{equation*}
\begin{split}
G(t) = 
\bigcap_{i=1}^{+\infty} \overline{H_{i}(t)}^{p_{i}}
\subset 
\bigcap_{i=1}^{+\infty} \overline{F_{i}(t)}^{p_{i}} = 
\bigcap_{i=1}^{+\infty} F_{i}(t) = F(t).
\end{split}
\end{equation*}

$\bullet$ 
Given any $k \in \mathbb{N}$ we claim that $G$ is $p_{k}$-Effros measurable. 
Indeed, 
for each $i \in \mathbb{N}$ the multifunction
\begin{equation*}
\begin{split}
H_{i} : \Omega \setminus \Omega_{0} \to 2^{Y},
\quad
H_{i}(t) = \bigcup_{j \geq i} G_{j}(t)
\end{split}
\end{equation*}
is $p_{i}$-Effros measurable, 
since $\tau_{i} \subset \tau_{j}$ for all $j \geq i$, 
each $G_{j}$ is $p_{j}$-Effros measurable and
\begin{equation*}
\begin{split}
H_{i}^{-}(\mathcal{O}) = \bigcup_{j \geq i} G_{j}^{-}(\mathcal{O}),
\quad
\text{for every }\mathcal{O} \in \tau_{i}.
\end{split}
\end{equation*}
Consequently, the multifunction
\begin{equation*}
\begin{split}
T_{i}: \Omega \setminus \Omega_{0} \to 2^{Y}, 
\quad
T_{i}(t) = \overline{H_{i}(t)}^{p_{i}}
\end{split}
\end{equation*}
is also $p_{i}$-Effros measurable.

Note that
\begin{equation*}
\begin{split}
G(t) = 
\bigcap_{i \geq k} T_{i}(t)
\end{split}
\end{equation*}
and $T_{i}$ is $p_{k}$-Effros measurable for each $i \geq k$, 
since $\tau_{k} \subset \tau_{i}$ for every  $i \geq k$.
Hence, for each $y \in Y$ and $i \geq k$, 
by \cite[Proposition 1.4, p.142]{HU} the function 
\begin{equation*}
\begin{split}
h_{y}^{i} : \Omega \setminus \Omega_{0} \to Y, 
\quad
h_{y}^{i}(t) = \text{dist}_{k}(y, T_{i}(t))
\end{split}
\end{equation*}
is a real valued measurable function, 
and since by Lemma \ref{L_KKeyLemma} the function
\begin{equation*}
\begin{split}
h_{y} : \Omega \setminus \Omega_{0} \to Y, 
\quad
h_{y}(t) = \text{dist}_{k}(y, G(t)) 
\end{split}
\end{equation*}
is such that
\begin{equation*}
\begin{split}
h_{y}(t) = \inf_{i \geq k} h_{y}^{i}(t)
\end{split}
\end{equation*}
it follows that $h_{y}$ is also measurable. 
Again, by \cite[Proposition 1.4, p.142]{HU} the multifunction $G$  
is $p_{k}$-Effros measurable.

$\bullet$ Since $G$  
is $p_{k}$-Effros measurable for every $k \in \mathbb{N}$ 
we obtain by Lemma \ref{Relationship_Effros} that $G$ is Effros measurable.

$\bullet$ We can assume that $G(t) \in ck(Y)$ for all $t \in \Omega \setminus \Omega_{0}$, 
since $G(t) \subset F(t)$ and 
$F(t) \in ck(X)$ for every  $t \in \Omega \setminus \Omega_{0}$.

$(iii) \Rightarrow (i)$ 
Assume that $(iii)$ holds. 
Then by Kuratowski-Ryll Nardzeiski's theorem 
there exists a measurable selector 
$g: \Omega \setminus \Omega_{0} \to Y$ of $G$. 
Define a function $f:\Omega \to X$ as $f(t) =g(t)$ for every $t \in \Omega \setminus \Omega_{0}$ 
and $f(t)$ as any point  of $F(t)$ for $t \in \Omega_{0}$.  
Then $f$ is a selector of $F$ that is  $p_{i}$-strongly measurable 
for every $i \in \mathbb{N}$, 
by \cite[Theorem II.1.2, p.42]{DIES}. 
Consequently, the function $f$ is measurable by seminorm. 
Thus the multifunction $F$ 
admits a measurable by seminorm selector, 
and this ends the proof. 
\end{proof}


\begin{thebibliography}{plain}







\bibitem{CASC1} B.~Cascales, V.~Kadets, and J.~Rodr\'{i}guez,  
\textit{Measurable selectors and set-valued Pettis integral in non-separable Banach spaces}, 
J. Funct. Anal. \textbf{256/3} (2009), 673-699.




\bibitem{CASC2} B.~Cascales, V.~Kadets, and J.~Rodr\'{i}guez, 
\textit{Measurability and selections of multifunctions in Banach spaces}, 
J. Convex Analysis \textbf{17} (2010), 229-240. 





\bibitem{CAST} C. ~Castaing, ~M. ~Valadier, 
\textit{Convex Analysis and Measurable Multifunctions} ,
Lect. Notes Math., \textbf{580}, Springer-Verlag, Berlin (1977).





\bibitem{DIES} J.~Diestel and J.~J.~Uhl, 
\textit{Vector Measures}, 
Math. Surveys, vol.15, Amer. Math. Soc., Providence, RI, 1977.






\bibitem{KOTHE} G.~K\"{o}the, 
\textit{Topological Vector Spaces I}, 
Springer-Verlag, (1983).







\bibitem{HANS1} R.~W.~Hansell, 
\textit{Extended Bochner measurable selectors}, 
Math. Ann., \textbf{277} (1987), 79-94.






\bibitem{HANS2} R.~W.~Hansell, 
\textit{Hereditarily-additive families in descriptive set theory and Borel measurable
multimaps}, 
Trans. AMS, \textbf{278(2)} (1983), 725-749.





\bibitem{HIM} C.J.Himmelberg, F.S. Van Vleck K.Prikry, 
\textit{The Hausdorff metric and measurable selections}, 
Topology and its Applications, \textbf{20} (1985), 121-133.



\bibitem{HU} S. ~Hu, N.~S.~Papageorgiou, 
\textit{Handbook of Multivalued Analysis, Vol.I}, 
Kluwer Academic Publishers, (1997).  


\bibitem{KUR} K.~Kuratowski and C.~Ryll-Nardzewski, 
\textit{A general theorem on selectors}, 
Bull. Acad. Polon. Sci. S\'{e}r. Sci. Math. Astronom. Phys. 13 (1965), 397-403.





\bibitem{MAG} G. M\"{a}gerl,
\textit{A unified approach to measurable and continuous selections}, 
Trans. AMS, \textbf{245} (1978), 443-452.








\end{thebibliography}
\end{document}